  \newtheorem{thm}{Theorem}
  \newtheorem{lem}[thm]{Lemma}
  \theoremstyle{definition}
  \theoremstyle{remark}
\begin{document}

\def\squareforqed{\hbox{\rlap{$\sqcap$}$\sqcup$}}
\def\qed{\ifmmode\squareforqed\else{\unskip\nobreak\hfil
\penalty50\hskip1em\null\nobreak\hfil\squareforqed
\parfillskip=0pt\finalhyphendemerits=0\endgraf}\fi}

\def\cA{{\mathcal A}}
\def\cB{{\mathcal B}}
\def\cC{{\mathcal C}}
\def\cD{{\mathcal D}}
\def\cE{{\mathcal E}}
\def\cF{{\mathcal F}}
\def\cG{{\mathcal G}}
\def\cH{{\mathcal H}}
\def\cI{{\mathcal I}}
\def\cJ{{\mathcal J}}
\def\cK{{\mathcal K}}
\def\cL{{\mathcal L}}
\def\cM{{\mathcal M}}
\def\cN{{\mathcal N}}
\def\cO{{\mathcal O}}
\def\cP{{\mathcal P}}
\def\cQ{{\mathcal Q}}
\def\cR{{\mathcal R}}
\def\cS{{\mathcal S}}
\def\cT{{\mathcal T}}
\def\cU{{\mathcal U}}
\def\cV{{\mathcal V}}
\def\cW{{\mathcal W}}
\def\cX{{\mathcal X}}
\def\cY{{\mathcal Y}}
\def\cZ{{\mathcal Z}}

\def\fH{{\mathfrak H}}
\def\fR{{\mathfrak R}}
\def\fM{{\mathfrak M}}

\def \C {{\mathbb C}}
\def \F {{\mathbb F}}
\def \L {{\mathbb L}}
\def \K {{\mathbb K}}
\def \Q {{\mathbb Q}}
\def \Z {{\mathbb Z}}

\def\\{\cr}
\def\({\left(}
\def\){\right)}
\def\[{\left[}
\def\]{\right]}
\def\fl#1{\left\lfloor#1\right\rfloor}
\def\cl#1{\left\lceil#1\right\rceil}

\def \lcm{{\mathrm {lcm}}}
\def \rad{{\mathrm {rad}}}
\def \ord{{\mathrm {ord}}}
\def \llog{{\mathrm{llog~}}}
\def \Li{{\mathrm {Li}}}
\def\e{\mathbf{e}}
\def\em{\e_m}
\def\el{\e_\ell}
\def\Res{\mathrm{Res}}

\def\ellax{\vec{a} \cdot \vec{x}}
\def\ellay{\vec{a} \cdot \vec{y}}

\def\mand{\qquad \text{and} \qquad}
\renewcommand{\vec}[1]{\mathbf{#1}}

\newcommand{\comm}[1]{\marginpar{%
\vskip-\baselineskip 
\raggedright\footnotesize
\itshape\hrule\smallskip#1\par\smallskip\hrule}}

\title{Counting Irreducible Binomials over Finite Fields}

 \author[R. Heyman] {Randell Heyman}

\address{Department of Pure Mathematics, University of New South Wales,
Sydney, NSW 2052, Australia}
\email{randell@unsw.edu.au}

 \author[I. E. Shparlinski] {Igor E. Shparlinski}

\address{Department of Pure Mathematics, University of New South Wales,
Sydney, NSW 2052, Australia}
\email{igor.shparlinski@unsw.edu.au}


\begin{abstract} We consider various counting questions
for irreducible binomials over finite fields. We use various
results from analytic number theory to investigate these
questions.
\end{abstract}

\subjclass[2010]{11T06}

\keywords{Irreducible binomials; finite fields; primes in arithmetic progressions}

\maketitle

\section{Introduction}

\subsection{Background}

It is reasonably easy to obtain an asymptotic formula for the
total number of  irreducible polynomials
over the finite field $\F_q$ of $q$ elements, see~\cite[Theorem~3.25]{LiNi}.

Studying irreducible  polynomials with some prescribed coefficients
is much more difficult, yet remarkable progress has also been achieved
in this direction, see~\cite{Cohen, Hucz, Poll} and references therein.

Here we consider a special case of this problem
and investigate some counting questions concerning irreducible binomials
over the finite field $\F_q$ of $q$ elements.  More precisely, for an integer $t$
and a  prime power $q$,
 let $N_t(q)$ be the number of irreducible binomials
 over $\F_q$ of the form $X^t-a \in  \F_q[X]$.

We use a well known characterisation of irreducible binomials $X^t-a$
over  $\F_q$ of $q$ elements to count the total number of such
binomials on average over $q$ or $t$. In fact,  we consider several
natural regimes, for example, when  $t$ is fixed and $q$ varies
or when both vary in certain ranges $t \le T$ and $q\le Q$.
There has always been very active
interest in  binomials, see~\cite[Notes to Chapter~3]{LiNi} for a survey of classical
results.
Furthermore, irreducible binomials have been used in~\cite{Shoup}
as building blocks for constructing other irreducible polynomials over
finite fields, and in ~\cite{BMGVBO} for characterising the irreducible factors of $x^n-1$ (see also~\cite{ABK, MaZi} and references therein
for more recent applications).
However,  the natural  question of investigating the behaviour of $N_t(q)$
has never been addressed in the literature.

Our methods rely on several classical and modern results of analytic number
theory; in particular the distribution of primes in arithmetic progressions.

\subsection{Notation}
\label{sec:not}

As usual, let $\omega(s)$, $\pi(s)$, $\varphi(s)$, $\Lambda(s)$ and $\zeta(s)$  denote the number of distinct prime factors of $s$, the number of prime numbers less than or equal to $s$, the Euler totient function, the von Mangoldt function and the
Riemann-zeta function evaluated at $s$ respectively.

For positive integers $Q$ and $s$ we denote the number of primes in arithmetic progression by
 $$\pi(Q;s,a)=\sum_{\substack{p \le Q\\p \equiv a \pmod s}}1.$$
We also denote
$$\psi(Q;s,a)=\sum_{\substack{p \le Q\\p \equiv a \pmod s}}\Lambda(p).$$
The letter $p$ always denotes a prime number whilst the letter $q$ always denotes
a prime power.

We recall that the notation $f(x) = O(g(x))$  or $f(x) \ll g(x)$ is
equivalent to the assertion that there exists a constant $c>0$ (which may depend on the real parameter $\varepsilon> 0$) such that $|f(x)|\le c|g(x)|$ for all $x$. The notation
$f(x)=o(g(x))$ is equivalent to the assertion that $$\lim_{x \rightarrow \infty}\frac{f(x)}{g(x)}=0.$$
The notation $f(x) \sim g(x)$ is equivalent to the assertion that $$\lim_{x \rightarrow \infty} \frac{f(x)}{g(x)}=1.$$

We define
$\log x$ as $\log x=\max\{\ln x, 2\}$ where  $\ln x$ is the natural logarithm,
Furthermore,  for an integer $k\ge 2$, we define
recursively $\log_k x=\log(\log_{k-1}x)$.

Finally, we use $\Sigma^\sharp$ to indicate that the summation is
only over squarefree arguments in the range of summation.

\subsection{Main results}

We denote the radical of an integer $t\ne 0$, the largest square-free number that divides $t$, by $\rad(t)$. It is also convenient to define
$$\rad_4(t)=\begin{cases}\rad(t)&\mbox{if } 4  \nmid t,\\
2\rad(t)&\mbox{otherwise}.
\end{cases}$$

We start with an upper bound
on the average value of $N_t(q)$ for a fixed $t$ averaged over $q\le Q$.

\begin{thm}
\label{thm:UppBound q}
For any fixed $\varepsilon>0$ uniformly over real $Q$ and  positive integers  $t$
with $\rad_4(t) \le Q^{1-\varepsilon}$, we have
$$\sum_{q \le Q}N_t(q)\le (1 + o(1)) \frac{Q^2}{\rad_4(t)\log (Q/\rad_4(t))}$$
as $Q\to \infty$.
\end{thm}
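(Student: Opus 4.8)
The plan is to reduce, via the classical irreducibility criterion for binomials over $\F_q$ (see \cite{LiNi}), to a sum of prime powers in an arithmetic progression. Recall that for $t\ge 2$ and $a\in\F_q^{*}$, $X^{t}-a$ is irreducible over $\F_q$ if and only if every prime divisor of $t$ divides the multiplicative order $e$ of $a$ but not $(q-1)/e$, and moreover $4\mid q-1$ whenever $4\mid t$. In particular $N_t(q)=0$ unless $\rad_4(t)\mid q-1$. When $\rad_4(t)\mid q-1$, put $d=\prod_{p\mid t}p^{v_{p}(q-1)}$; then the admissible orders $e$ are exactly the multiples of $d$ dividing $q-1$, and since $\gcd(d,(q-1)/d)=1$,
$$N_t(q)=\sum_{\substack{e\mid q-1\\ d\mid e}}\varphi(e)=\varphi(d)\,\frac{q-1}{d}=\frac{\varphi(\rad(t))}{\rad(t)}\,(q-1),$$
the last step because $d$ and $\rad(t)$ have the same prime divisors. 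Writing $m=\rad_4(t)$, $r=\rad(t)$, this reduces the theorem to bounding $\sum_{q\le Q,\ q\equiv 1\,(m)}(q-1)$ and multiplying by $\varphi(r)/r$.

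Next I would split that sum into primes and proper prime powers, the main term coming from the primes alone. For the prime part,
$$\sum_{\substack{p\le Q\\ p\equiv 1\,(m)}}(p-1)\le \sum_{\substack{p\le Q\\ p\equiv 1\,(m)}}p=\int_{0}^{Q}\bigl(\pi(Q;m,1)-\pi(t;m,1)\bigr)\,dt .$$
After a harmless truncation near $t=Q$ (an interval of length $O(m)$ contributing $O(m)$), I would apply the interval form of the Brun--Titchmarsh inequality $\pi(Q;m,1)-\pi(t;m,1)\le \frac{2(Q-t)}{\varphi(m)\log((Q-t)/m)}$ and substitute $v=(Q-t)/m$ to arrive at $\frac{2m^{2}}{\varphi(m)}\int_{1}^{Q/m}\frac{v\,dv}{\log v}$. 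Since $Q/m\ge Q^{\varepsilon}\to\infty$ uniformly and $\int_{1}^{X}\frac{v\,dv}{\log v}\sim\frac{X^{2}}{2\log X}$, this is $(1+o(1))\frac{Q^{2}}{\varphi(m)\log(Q/m)}$. Multiplying by $\varphi(r)/r$ and using $\varphi(2r)=2\varphi(r)$ for even $r$ (the case $4\mid t$, where $m=2r$), the prime contribution is $\le(1+o(1))\frac{Q^{2}}{m\log(Q/m)}$ whether or not $4\mid t$ — exactly the asserted bound. The one subtle point is the constant: it is the factor $\tfrac12$ from the logarithmic integral that cancels the $2$ in Brun--Titchmarsh, so the cruder estimate $\pi(Q;m,1)\le 2Q/(\varphi(m)\log(Q/m))$ would not suffice.

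It remains to show the proper prime powers contribute $o\bigl(Q^{2}/(m\log(Q/m))\bigr)$; this is the technical core. If $m\le Q^{1/3}$ one may drop the congruence: there are $\ll Q^{1/2}$ proper prime powers up to $Q$, each at most $Q$, so their contribution is $\ll Q^{3/2}=o\bigl(Q^{2}/(m\log(Q/m))\bigr)$, the right side being $\ge Q^{5/3}/\log Q$ in this range. For $Q^{1/3}<m\le Q^{1-\varepsilon}$ one uses the congruence quantitatively. For squares $p^{2}\equiv 1\,(m)$: from $m\mid(p-1)(p+1)$ write $m=m_{1}m_{2}$ with $\gcd(m_{1},m_{2})\mid 2$, confining $p$ to one of $\tau(m)=Q^{o(1)}$ classes modulo $\lcm(m_{1},m_{2})\ge m/2>Q^{1/3}$; bounding the primes $p\le Q^{1/2}$ in each class by $O(Q^{1/2}/m+1)$ (and by Brun--Titchmarsh, with a further split at $m=Q^{1/2-\varepsilon/2}$, when $m$ is appreciably below $Q^{1/2}$) yields a contribution $o\bigl(Q^{1+\varepsilon}/\log Q\bigr)$. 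For higher powers $p^{k}\equiv 1\,(m)$ with $k\ge 3$: here $p\le Q^{1/k}\le Q^{1/3}<m$, so each of the $\rho_{k}(m):=\#\{c\bmod m:\gcd(c,m)=1,\ c^{k}\equiv 1\}$ relevant classes holds at most one prime $\le Q^{1/k}$; since $\rho_{k}(m)\ll k^{\omega(m)}$ with $\omega(m)\ll\log m/\log\log m$, a short computation gives $\sum_{3\le k\le 2\log Q}\min\bigl(k^{\omega(m)},Q^{1/k}\bigr)=Q^{o(1)}$ (the minimum peaks at the crossover exponent $k^{\ast}$ with $k^{\ast}\log k^{\ast}=(\log Q)/\omega(m)$, and $k^{\ast}\to\infty$), so this part contributes $Q^{1+o(1)}=o\bigl(Q^{1+\varepsilon}/\log Q\bigr)$ as well. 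As $Q^{2}/(m\log(Q/m))\ge Q^{1+\varepsilon}/\log Q$ throughout the range, collecting these bounds finishes the proof.

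I expect this last step to be the main obstacle: for $m$ near $Q^{1/2}$ the trivial prime-power count is only of the same order as the target, so the condition $q\equiv 1\,(m)$ must genuinely be exploited, and several ranges of $m$ must be handled together with uniform divisor-type bounds on $\tau(m)$, $\omega(m)$ and $\rho_{k}(m)$.
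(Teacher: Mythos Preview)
Your proof is correct and follows essentially the same route as the paper: both reduce via the explicit formula $N_t(q)=\frac{\varphi(\rad t)}{\rad t}(q-1)$ when $\rad_4(t)\mid q-1$ (this is the paper's Lemma~\ref{lem:ntq}, which you rederive), split the sum over $q$ into primes and proper prime powers, handle the primes by Brun--Titchmarsh with partial summation to obtain the asserted main term, and dispose of the prime powers by counting residues modulo $m$. The one organisational difference is in the prime-power step: rather than your case split on the size of $m$ together with a crossover analysis for $\sum_k\min(k^{\omega(m)},Q^{1/k})$, the paper simply splits the exponent $r$ at the fixed constant $\lambda=2/\varepsilon$, bounding the $r\le\lambda$ contribution by $r^{\omega(s)}(Q^{1/r}s^{-1}+1)\cdot Q$ per exponent and the $r>\lambda$ contribution trivially by $Q^{1+1/\lambda}\log Q$, which gives the same $o$-term with less work.
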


We also present the following lower bound
(which has $\varphi(\rad(t))^2$ instead of the expected $\varphi(\rad(t))$).

\begin{thm}
\label{thm:LowBound q}
There exists an absolute constant $L > 0$ such that  uniformly over
 real $Q$ and  positive integers  $t$
with $Q \ge t^L$ we have
$$\sum_{q \le Q}N_t(q)\gg \frac{Q^2}{\varphi(\rad(t))^2(\log Q)^2}.$$
\end{thm}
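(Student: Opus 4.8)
The plan is to convert the average into a sum of $(p-1)$ over primes $p$ in a single residue class, and then to feed in a Linnik‑type lower bound for primes in progressions; the condition $Q\ge t^{L}$ will be exactly what makes such a bound available.

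\emph{Step 1: an exact formula for $N_t(q)$.} We may assume $t\ge 2$, the case $t=1$ being trivial ($N_1(q)=q$, and the bound follows from the prime number theorem). By the classical criterion for irreducibility of binomials, \cite[Theorem~3.75]{LiNi}, for $a\in\F_q^{\ast}$ the binomial $X^t-a$ is irreducible over $\F_q$ if and only if (i) $v_\ell(\ord(a))=v_\ell(q-1)\ge 1$ for every prime $\ell\mid t$, and (ii) $4\mid q-1$ whenever $4\mid t$. Since $\F_q^{\ast}$ is cyclic of order $q-1$, there are exactly $\varphi(e)$ elements of each order $e\mid q-1$. Conditions (i)--(ii) first force $\rad_4(t)\mid q-1$; when this holds they say that the part of $\ord(a)$ supported on $\rad(t)$ equals the fixed integer $e_1=\prod_{\ell\mid\rad(t)}\ell^{v_\ell(q-1)}$, while the complementary part can be any divisor of $(q-1)/e_1$. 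Hence, by multiplicativity of $\varphi$ and $\sum_{d\mid n}\varphi(d)=n$,
$$N_t(q)=\varphi(e_1)\!\!\sum_{d\mid (q-1)/e_1}\!\!\varphi(d)=\varphi(e_1)\cdot\frac{q-1}{e_1}=(q-1)\,\frac{\varphi(\rad(t))}{\rad(t)}\qquad\text{if }\rad_4(t)\mid q-1,$$
and $N_t(q)=0$ otherwise. Discarding all prime powers which are not prime,
$$\sum_{q\le Q}N_t(q)=\frac{\varphi(\rad(t))}{\rad(t)}\sum_{\substack{q\le Q\\ \rad_4(t)\mid q-1}}(q-1)\ \ge\ \frac{\varphi(\rad(t))}{\rad(t)}\sum_{\substack{p\le Q\\ p\equiv 1\pmod{\rad_4(t)}}}(p-1).$$

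\emph{Step 2: primes in the progression $1\pmod m$, with $m=\rad_4(t)$.} Since $m\le 2t$ and $Q\ge t^{L}$, taking $L$ to be a sufficiently large absolute constant (a suitable multiple of Linnik's constant) guarantees $Q\ge 2m^{L_0}$; the finitely many bounded $t$ cause no trouble, since then $Q$ is large while $m$ is bounded. I would then invoke a Linnik‑type lower bound for primes in progressions valid in the polynomial range, in a robust (deliberately non‑sharp) form such as
$$\pi(2x;m,1)-\pi(x;m,1)\ \gg\ \frac{x}{\varphi(m)^{2}\log x}\qquad(x\ge m^{L_0}),$$
which is insensitive to a possible Siegel zero modulo $m$; this is where the exponent $2$ on $\varphi$ in the final bound originates. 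Applying it with $x=Q/2$ and using $p-1\gg Q$ on the resulting dyadic block,
$$\sum_{\substack{p\le Q\\ p\equiv 1\pmod m}}(p-1)\ \ge\ \sum_{\substack{Q/2<p\le Q\\ p\equiv 1\pmod m}}(p-1)\ \gg\ Q\bigl(\pi(Q;m,1)-\pi(Q/2;m,1)\bigr)\ \gg\ \frac{Q^{2}}{\varphi(m)^{2}\log Q}.$$

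\emph{Step 3: conclusion.} Combining Steps 1 and 2 and using $\varphi(\rad_4(t))\asymp\varphi(\rad(t))$,
$$\sum_{q\le Q}N_t(q)\ \gg\ \frac{\varphi(\rad(t))}{\rad(t)}\cdot\frac{Q^{2}}{\varphi(\rad(t))^{2}\log Q}\ =\ \frac{Q^{2}}{\rad(t)\,\varphi(\rad(t))\,\log Q}\ \ge\ \frac{Q^{2}}{\varphi(\rad(t))^{2}(\log Q)^{2}},$$
the final inequality because $\rad(t)/\varphi(\rad(t))\ll\log\log t\le\log Q$ (recall $Q\ge t^{L}$), so $\rad(t)\,\varphi(\rad(t))\,\log Q\ll\varphi(\rad(t))^{2}(\log Q)^{2}$. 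This is the assertion.

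\emph{Where the difficulty lies.} All the weight is in Step 2: there is no hope of a sharp asymptotic for $\pi(x;m,1)$ when $x$ is only a fixed power of $m$, because a Siegel zero of the real character modulo $m$ would contribute to $\psi(x;m,1)$ a term of the same order and opposite sign as the main term; getting past this is exactly Linnik's theorem (log‑free zero‑density estimates together with the Deuring--Heilbronn phenomenon), and the loss of a factor $\varphi(\rad(t))$ — the "$\varphi^{2}$ instead of $\varphi$" — plus the spare logarithm is precisely the price of using that robust but non‑sharp input rather than a genuine equidistribution statement, which is unavailable in this range. Step 1 and the estimates of Step 3 are routine.
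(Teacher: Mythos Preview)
Your Step~1 rederives Lemma~\ref{lem:ntq} correctly, and your Step~3 is fine --- in fact it supplies a justification the paper glosses over: after multiplying the lower bound on $\sum p$ by $\varphi(t)/t=\varphi(s)/s$ one lands at $Q^2/(s\,\varphi(s)(\log Q)^{\bullet})$, and one does need the observation $s/\varphi(s)\ll\log\log t\ll\log Q$ to reach the stated form with $\varphi(s)^2$ in the denominator.

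The gap is Step~2. The dyadic Linnik bound
$$
\pi(2x;m,1)-\pi(x;m,1)\ \gg\ \frac{x}{\varphi(m)^{2}\log x}\qquad(x\ge m^{L_0})
$$
that you invoke is not a standard statement, and it does \emph{not} follow from the cumulative bound $\psi(Q;m,1)\gg Q/(\varphi(m)\sqrt{m})$ of Lemma~\ref{lem:Linnik} (i.e.\ \cite[Corollary~18.8]{IwKow}) by subtraction: the only available upper bound on $\psi(x;m,1)$ is $O(x/\varphi(m))$ via Brun--Titchmarsh, and this swamps the lower bound $\gg 2x/(\varphi(m)\sqrt{m})$ as soon as $m$ is large, so $\psi(2x;m,1)-\psi(x;m,1)$ cannot be controlled this way. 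A dyadic statement of your shape can plausibly be extracted by rerunning the Linnik machinery (log-free density plus Deuring--Heilbronn) on the interval $(x,2x]$, but that is genuine extra work; as written, Step~2 is an assertion rather than an argument.

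The paper sidesteps this with an elementary device that uses only the cumulative bound. From Lemma~\ref{lem:Linnik} one has $N:=\pi(Q;2s,1)\gg Q/(\varphi(s)\sqrt{s}\log Q)$. The primes $p\le Q$ with $p\equiv 1\pmod{2s}$ are $N$ distinct integers in that residue class, so the $j$-th one is at least $2js+1$; hence
$$
\sum_{\substack{p\le Q\\ p\equiv 1\pmod{2s}}}p\ \ge\ \sum_{j=1}^{N}(2js+1)\ \ge\ sN^{2}\ \gg\ \frac{Q^{2}}{\varphi(s)^{2}(\log Q)^{2}},
$$
the factor $s$ coming from the spacing cancelling the $(\sqrt{s})^{2}$ in the Linnik bound. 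Replacing your Step~2 by this observation (and keeping your Step~3) makes the proof go through.
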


We also investigate $N_t(q)$ for a fixed $q$ averaged  over $t\le T$.

\begin{thm}
\label{thm:UppBound t}
For any fixed positive $A$ and $\varepsilon$
and a sufficiently large real $q$ and  $T$ with
$$
T \ge \(\log (q-1)\)^{(1+ \varepsilon) A \log_3 q/\log_4 q}
$$
we have
$$
\sum_{t \le T}N_t(q)\le (q-1)T/(\log T)^A.
$$
\end{thm}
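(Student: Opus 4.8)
The plan is to use the classical criterion for irreducibility of binomials to turn the sum into a count of smooth numbers, and then to invoke the de~Bruijn--Hildebrand estimates for smooth numbers; the hypothesis on $T$ will turn out to be essentially exactly what is needed to make the relevant value of the Dickman function small enough.

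First, I would use \cite[Theorem~3.75]{LiNi}: if $t\ge 2$ and $X^t-a\in\F_q[X]$ is irreducible, then every prime dividing $t$ divides $\ord(a)$, and $\ord(a)\mid q-1$; in particular every prime factor of $t$ divides $q-1$. Since only $a\in\F_q^*$ can give an irreducible $X^t-a$, one has $N_t(q)\le q-1$ for $t\ge 2$ (and $N_1(q)=q$), so
$$\sum_{t\le T}N_t(q)\ \le\ q+(q-1)M,\qquad M:=\#\{2\le t\le T:\ \ell\mid t\ \Rightarrow\ \ell\mid q-1\}.$$
Next, let $\ell_1<\dots<\ell_k$ be the prime divisors of $q-1$, so $k=\omega(q-1)$, and let $p_i$ be the $i$-th prime; then $\ell_i\ge p_i$ for each $i$, so $\prod_i\ell_i^{e_i}\mapsto\prod_i p_i^{e_i}$ is injective and does not increase the value, whence $M\le\Psi(T,p_k)$, where $\Psi(x,y)$ denotes the number of integers in $[1,x]$ with no prime factor exceeding $y$. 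Finally, the maximal order of $\omega$ gives $k\le(1+o(1))\log q/\log_2 q$, and since $p_k\sim k\log k$ this yields $p_k\le(1+o(1))\log q$, that is, $\log p_k\le(1+o(1))\log_2 q$.

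It then remains to show $\Psi(T,p_k)\le T/(\log T)^A$ for $q$ large. I would first dispose of the case $p_k\le\exp(\sqrt{\log T})$: there $\Psi(T,p_k)\le\Psi\(T,\exp(\sqrt{\log T})\)$, and Hildebrand's theorem together with $\log(1/\rho(v))=(1+o(1))v\log v$ for the Dickman function $\rho$, applied with $v\sim\sqrt{\log T}$, gives $\Psi(T,p_k)\le T\exp(-\tfrac13\sqrt{\log T}\,\log_2 T)$, which is far below $T/(\log T)^A$. So suppose $p_k>\exp(\sqrt{\log T})$. Then $u:=\log T/\log p_k<\sqrt{\log T}$, the pair $(T,p_k)$ lies well within the range where $\Psi(T,p_k)=T\rho(u)(1+o(1))$ is known, and the hypothesis $\log T\ge(1+\varepsilon)A\log_3 q\log_2 q/\log_4 q$ (using $\log(q-1)\sim\log q$) combined with $\log p_k\le(1+o(1))\log_2 q$ gives $u\ge(1+\varepsilon-o(1))A\log_3 q/\log_4 q\to\infty$. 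Since $x\mapsto x\log x$ is increasing and $\log\bigl((1+\varepsilon)A\log_3 q/\log_4 q\bigr)=(1-o(1))\log_4 q$, this forces $u\log u\ge(1+\varepsilon-o(1))A\log_3 q$, hence $\log(1/\rho(u))\ge u\log u\ge(1+\varepsilon-o(1))A\log_3 q$ for $q$ large; on the other hand $\log_2 T=\log u+\log_2 p_k\le\log u+(1+o(1))\log_3 q$. Comparing the two, one checks $\log(1/\rho(u))-A\log_2 T\ge\tfrac12\varepsilon A\log_3 q$ for $q$ large: if $u\le(\log_3 q)^2$ then $A\log u=o(\log_3 q)$ is swamped by the margin $\varepsilon A\log_3 q$, while if $u>(\log_3 q)^2$ then $u\log u$ alone exceeds $A\log_2 T$ by much more. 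Therefore $\Psi(T,p_k)\le T/(\log T)^A$, and $\sum_{t\le T}N_t(q)\le q+(q-1)\Psi(T,p_k)\le(q-1)T/(\log T)^A$, the term $q$ being absorbed since $T/(\log T)^A\to\infty$.

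The delicate point is the inequality $\log(1/\rho(u))-A\log_2 T\ge\tfrac12\varepsilon A\log_3 q$: the bound on $T$ is, up to the factor $1+\varepsilon$, the exact threshold below which $\rho(\log T/\log p_k)$ exceeds $(\log T)^{-A}$, so no cheaper input will do. In particular the optimized Rankin bound $\Psi(x,y)\le x^\sigma\prod_{\ell\le y}(1-\ell^{-\sigma})^{-1}$ is useless here: relative to $x\rho(u)$ it loses a factor of order $\log_2 q=\exp(\log_3 q)$, which is fatal because the quantity under consideration is itself of size $\exp(-\Theta(\log_3 q))$. One therefore genuinely needs the sharp asymptotic $\Psi(x,y)\sim x\rho(u)$ together with the precise rate $\rho(u)=\exp(-(1+o(1))u\log u)$ of the Dickman function, and every $o(1)$ has to be weighed against the $\varepsilon$-margin. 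Keeping $(T,p_k)$ inside the range of validity of that asymptotic is the reason for the preliminary split on the size of $p_k$, but that part is routine.
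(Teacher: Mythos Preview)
Your reduction to the count $M=\#\{t\le T:\rad(t)\mid q-1\}$ is exactly the paper's first step: the authors use their Lemma~\ref{lem:ntq} to write $N_t(q)\le(q-1)$ whenever $\rad(t)\mid q-1$ and $N_t(q)=0$ otherwise, arriving at the same inequality $\sum_{t\le T}N_t(q)\le(q-1)\rho_T(q-1)$ in their notation.

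The divergence is in how the count is bounded.  The paper simply invokes its Lemma~\ref{lem:SmallDiv}, quoted from Grigoriev--Tenenbaum, which asserts precisely that $\rho_T(n)\le T/(\log T)^A$ once $T\ge(\log n)^{(1+\varepsilon)A\log_3 n/\log_4 n}$; the proof is then one line.  You instead give a self-contained argument: the injection $\prod_i\ell_i^{e_i}\mapsto\prod_i p_i^{e_i}$ reduces $\rho_T(q-1)$ to $\Psi(T,p_{\omega(q-1)})$, the maximal order of $\omega$ forces $p_{\omega(q-1)}\le(1+o(1))\log q$, and then the Hildebrand asymptotic $\Psi(x,y)\sim x\rho(u)$ together with $\log(1/\rho(u))\sim u\log u$ finishes the job.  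Your case split on the size of $p_k$ keeps $(T,p_k)$ inside the range of validity of Hildebrand's theorem, and the bookkeeping with the $\varepsilon$-margin against the various $o(1)$ losses is done correctly; the leftover term from $t=1$ is absorbed because $T/(\log T)^A\to\infty$ under the hypothesis.

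What each route buys: the paper's proof is four lines but opaque, exporting all the content to the cited lemma.  Your argument is longer but explains \emph{why} the threshold on $T$ has exactly the shape it does---it is the point at which $\rho\bigl(\log T/\log_2 q\bigr)$ drops below $(\log T)^{-A}$---and it relies only on standard smooth-number machinery rather than a specialised reference.  In effect you have reproved (a form of) the Grigoriev--Tenenbaum bound; their own proof proceeds along similar lines.
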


Finally, we obtain an asymptotic formula for the double
average of $N_t(q)$  over $q\le Q$ and squarefree $t\le T$ in a rather wide
range of parameters $Q$ and $T$. With more work similar results
can also be obtained for the average value of $N_t(q)$  over all
integers $t \le T$. However to exhibit the ideas and simplify the
exposition, we limit ourselves to this special case, in particular
we recall our notation $\Sigma^\sharp$  from Section~\ref{sec:not}.

 \begin{thm}
\label{thm:Asymp q t}
For  any fixed $\varepsilon> 0$ and any
$$
T \le Q^{1/2}/(\log Q)^{5/2+\varepsilon}
$$
we have
$$
\sum_{t \le T}\hskip-18 pt{\phantom{\sum}}^\sharp\,\sum_{q \le Q} N_t(q)
= (1+o(1))\frac{Q^2\log T}{2\zeta(2)\log Q},
$$
as $T\to \infty$.
\end{thm}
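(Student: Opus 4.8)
\emph{Overview.} The plan is to turn $N_t(q)$ into an exact arithmetic quantity, reduce the double sum to a sum over primes in arithmetic progressions, extract the main term by elementary means, and absorb the error with the Bombieri--Vinogradov theorem; the range $T\le Q^{1/2}/(\log Q)^{5/2+\varepsilon}$ is precisely the one in which that last step goes through.

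\emph{Step 1: an exact formula for $N_t(q)$.} I would first show that, for squarefree $t$, $N_t(q)=\frac{\varphi(t)}{t}(q-1)$ if $t\mid q-1$ and $N_t(q)=0$ otherwise. By the classical irreducibility criterion for binomials \cite[Theorem~3.75]{LiNi}, for $t\ge 2$ the polynomial $X^t-a$ is irreducible over $\F_q$ exactly when every prime $\ell\mid t$ divides $e=\ord(a)$ while $\ell\nmid(q-1)/e$ (the clause for $4\mid t$ being vacuous when $t$ is squarefree), and this forces $t\mid q-1$. Writing $q-1=t^{*}v$ with $t^{*}$ the largest divisor of $q-1$ supported on the primes of $t$ and $\gcd(v,t)=1$, one checks that the admissible orders are exactly $e=t^{*}w$ with $w\mid v$; since $\F_q^{*}$ has $\varphi(e)$ elements of order $e$, $\sum_{w\mid v}\varphi(w)=v$, and $\varphi(t^{*})=t^{*}\varphi(t)/t$, the claim follows (and $t=1$ also obeys it). Hence
$$
\sum_{t\le T}^{\sharp}\,\sum_{q\le Q}N_t(q)=\sum_{t\le T}^{\sharp}\frac{\varphi(t)}{t}\sum_{\substack{q\le Q\\ q\equiv 1\pmod{t}}}(q-1).
$$

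\emph{Step 2: the main term.} The proper prime powers contribute $O(Q^{3/2})$ to the inner sum and the summand $-1$ contributes $-\pi(Q;t,1)+O(Q^{1/2})$; after weighting by $\varphi(t)/t\le 1$ and summing over $t\le T$ these amount to $O(TQ^{3/2}+Q\log T)$ --- using $\sum_{t\le T}\pi(Q;t,1)\le\sum_{t\le T}(Q/t+1)\ll Q\log T$ --- which is $o(Q^{2}\log T/\log Q)$ in the stated range, as $\log T\to\infty$. So it is enough to evaluate $\sum_{t\le T}^{\sharp}\frac{\varphi(t)}{t}\sum_{p\le Q,\;p\equiv 1\,(t)}p$. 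Partial summation against $\psi(u;t,1)$ (notation of Section~\ref{sec:not}) gives
$$
\sum_{\substack{p\le Q\\ p\equiv 1\pmod{t}}}p=\frac{Q}{\log Q}\,\psi(Q;t,1)-\int_2^{Q}\psi(u;t,1)\,d(u/\log u),
$$
and writing $\psi(u;t,1)=u/\varphi(t)+R_t(u)$ the contribution of the $u/\varphi(t)$ part equals $\frac1{\varphi(t)}\bigl(\frac{Q^{2}}{2\log Q}+O(\frac{Q^{2}}{(\log Q)^{2}})\bigr)$. Multiplying by $\varphi(t)/t$, summing over squarefree $t\le T$, and using $\sum_{t\le T}\mu^{2}(t)/t=\frac{\log T}{\zeta(2)}+O(1)$ --- immediate from $\sum_{t\ge1}\mu^{2}(t)t^{-s}=\zeta(s)/\zeta(2s)$, which has residue $1/\zeta(2)$ at $s=1$ --- together with $\log T\to\infty$, the main term becomes $(1+o(1))\dfrac{Q^{2}\log T}{2\zeta(2)\log Q}$, as asserted.

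\emph{Step 3: the error term, and where the difficulty lies.} The contribution of the $R_t(u)$ part in Step 2 is, for each $t$, $\ll\frac{Q}{\log Q}\max_{u\le Q}|R_t(u)|$, since $u\mapsto u/\log u$ is non-decreasing with $\int_2^{Q}d(u/\log u)\ll Q/\log Q$; hence, after weighting by $\varphi(t)/t\le1$, the total error is $\ll\frac{Q}{\log Q}\sum_{t\le T}\max_{u\le Q}|R_t(u)|$, plus the negligible terms already noted. This is where I would invoke the Bombieri--Vinogradov theorem in its maximal form: for every $A>0$,
$$
\sum_{t\le x^{1/2}/(\log x)^{A+5/2}}\;\max_{(a,t)=1}\;\max_{u\le x}\Bigl|\psi(u;t,a)-\frac{u}{\varphi(t)}\Bigr|\ll_{A}\frac{x}{(\log x)^{A}}
$$
(a Siegel-type ingredient makes the constants absolute but ineffective, which is harmless here). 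Taking $x=Q$ and $A=\varepsilon$, the hypothesis $T\le Q^{1/2}/(\log Q)^{5/2+\varepsilon}$ is exactly what lets this be applied, yielding $\sum_{t\le T}\max_{u\le Q}|R_t(u)|\ll Q/(\log Q)^{\varepsilon}$, so the error is $\ll Q^{2}/(\log Q)^{1+\varepsilon}=o(Q^{2}\log T/\log Q)$; together with Step 2 this proves the formula. All the substance is in this step: one has to run Bombieri--Vinogradov with level of distribution essentially $Q^{1/2}/(\log Q)^{5/2}$, which is the largest range in which the resulting error still beats the main term $Q^{2}\log T/\log Q$; and because partial summation brings in the discrepancies $R_t(u)$ at every scale $u\le Q$, while the modulus $t$ is comparable to $\sqrt{u}$ only for $u$ near $Q$, one genuinely needs the maximal ($\max_{u\le x}$) form of the theorem. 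Matching the $(\log Q)^{5/2+\varepsilon}$ margin in the hypothesis on $T$ against the logarithmic loss in that estimate is what fixes the precise shape of the statement; Steps~1 and~2 are routine.
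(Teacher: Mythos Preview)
Your proposal is correct and follows essentially the same route as the paper: the exact formula for $N_t(q)$, reduction from prime powers to primes, partial summation, and then Bombieri--Vinogradov at level $Q^{1/2}/(\log Q)^{5/2+\varepsilon}$ to control the error. The only cosmetic differences are that the paper carries out the partial summation with $\pi(\cdot\,;t,1)$ rather than $\psi(\cdot\,;t,1)$, and it invokes the explicit Dress--Iwaniec--Tenenbaum/Vaughan form of Bombieri--Vinogradov (Lemma~\ref{lem:Bomb-Vin}), which is precisely what justifies the exponent $5/2$ that you quote as a black box.
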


It seems difficult to obtain the asymptotic formula of Theorem~\ref{thm:Asymp q t}
for larger values of $T$ (even under the Generalised Riemann Hypothesis).
However, here we show that a result of Mikawa~\cite{Mik} implies a lower bound
of right order of magnitude for values of $T$ of order that may exceed $Q^{1/2}$.

 \begin{thm}
\label{thm:Lower q t}
For  any fixed  $\beta < 17/32$ and $T  \le  Q^\beta$, we have
$$
\sum_{T \le t \le 2T}\hskip-18 pt{\phantom{\sum}}^\sharp\,\sum_{q \le Q} N_t(q)
\gg \frac{Q^2}{\log Q},
$$
\end{thm}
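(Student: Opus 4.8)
The plan is to start from the standard characterisation of irreducible binomials: $X^t-a$ is irreducible over $\F_q$ if and only if $t \mid \ord_q(a)$ (every prime factor of $t$ divides the order of $a$ but not $(q-1)/\ord_q(a)$, together with the usual $4 \mid q-1$ condition when $4 \mid t$). For a squarefree $t$, counting $a$ for which $X^t - a$ is irreducible gives $N_t(q) = \varphi(q-1)/\psi$ for an appropriate divisor-type factor; the key point we need is the crude but clean lower bound $N_t(q) \gg \varphi(q-1)/(q-1) \cdot (q-1)/\text{(something)}$ — concretely, whenever $t \mid q-1$ (and, if $4\mid t$, also $4 \mid q-1$), one has $N_t(q) \ge \varphi((q-1)/t)\cdot(\text{positive const})$, which in turn is $\gg (q-1)/(t \log\log q)$ by the minimal order of Euler's function, or even just $\gg 1$ if we are willing to be wasteful. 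Since the target is $Q^2/\log Q$ and we sum over both $t \asymp T$ and $q \le Q$, we cannot afford to be too wasteful in $q$, but we can be completely wasteful in $t$: it suffices to find, for each prime $p \le Q$ with $p \equiv 1 \pmod t$, at least one irreducible binomial $X^t - a$ over $\F_p$, which holds automatically, so $N_t(p) \ge 1$ for all such $p$ (taking care of the $t$ even case by the innocuous restriction to $p \equiv 1 \pmod{2t}$, or $p\equiv 1 \pmod t$ with $t$ odd, which only costs a constant).

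With that reduction, the double sum is bounded below by
$$
\sum_{T \le t \le 2T}\hskip-18 pt{\phantom{\sum}}^\sharp\,\sum_{\substack{p \le Q\\ p \equiv 1 \!\!\pmod{t}}} 1
= \sum_{T \le t \le 2T}\hskip-18 pt{\phantom{\sum}}^\sharp\, \pi(Q; t, 1),
$$
so the whole problem becomes a lower bound for the number of pairs $(t,p)$ with $T \le t \le 2T$ squarefree, $p \le Q$ prime, and $t \mid p-1$. Equivalently, grouping by $p$, this counts $\sum_{p \le Q} \#\{t : T \le t \le 2T,\ t \text{ squarefree},\ t \mid p-1\}$. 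Now this is exactly the kind of quantity controlled by Mikawa's theorem~\cite{Mik} on primes $p \le x$ in progressions to moduli up to $x^{17/32}$: for $\beta < 17/32$ and $T \le Q^\beta$, Mikawa's result (a Bombieri–Vinogradov-type estimate valid for individual moduli on average, or in the relevant "almost all moduli" form) gives $\pi(Q; t, 1) = \bigl(1 + o(1)\bigr)\dfrac{\pi(Q)}{\varphi(t)}$ for almost all $t$ in $[T, 2T]$, or at least $\pi(Q;t,1) \gg Q/(\varphi(t)\log Q)$ for all but $o(T/\log T)$ such $t$. Summing over squarefree $t \in [T,2T]$ and using $\sum^\sharp_{T \le t \le 2T} 1/\varphi(t) \gg 1$ (indeed $\asymp 1$, since $\sum_{t\le x} 1/\varphi(t) \sim C\log x$ and the squarefree ones carry a positive proportion) yields
$$
\sum_{T \le t \le 2T}\hskip-18 pt{\phantom{\sum}}^\sharp\,\pi(Q;t,1)
\gg \frac{Q}{\log Q}\sum_{T \le t \le 2T}\hskip-18 pt{\phantom{\sum}}^\sharp\,\frac{1}{\varphi(t)}
\gg \frac{Q}{\log Q},
$$
which is short of the claimed $Q^2/\log Q$ by a factor $Q$ — so in fact one must \emph{not} collapse $N_t(p) \ge 1$ but rather keep $N_t(q) \gg (q-1)/(t\,\text{polylog})$, i.e. count \emph{all} admissible $a \in \F_q^\times$, which restores the missing factor $q \asymp Q$ after a dyadic summation over $q \in [Q/2, Q]$. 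Concretely one uses $N_t(p) \gg \varphi((p-1)/t) \gg (p-1)/(t \log\log p) \gg Q/(t\log\log Q)$ for $p \in [Q/2,Q]$ with $t \mid p-1$, and absorbs the $\log\log$ loss by noting it is $o((\log Q)^{\varepsilon})$ and can be beaten by the slack in $\beta < 17/32$, or — cleaner — one restricts to $t$ with $\varphi(t)/t \ge 1/2$ (a positive proportion of squarefree $t$) and to $p$ with no small prime factor of $(p-1)/t$.

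The main obstacle is the input from analytic number theory: we need Mikawa's estimate in a form that controls $\pi(Q;t,1)$ for \emph{almost all} $t \in [T,2T]$ (not merely on average with possible cancellation), since after extracting the main term $\pi(Q)/\varphi(t)$ we will sum nonnegative quantities. The standard Bombieri–Vinogradov range $T \le Q^{1/2-\varepsilon}$ would already give this (that is essentially Theorem~\ref{thm:Asymp q t}'s input), but to reach $\beta$ up to $17/32$ we must invoke Mikawa's stronger result on the distribution of primes in arithmetic progressions with moduli in a short-ish range, which provides a non-trivial lower bound $\pi(Q;t,1) \gg Q/(\varphi(t)\log Q)$ for all but a negligible exceptional set of $t \in [T,2T]$. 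Handling the bookkeeping — the exceptional set of moduli contributes $o(T/\log T) \cdot (\text{trivial bound } Q/\varphi(t)) = o(Q^2/\log Q)$ only after one checks $\sum_{t \in \text{exc}} 1/\varphi(t)$ is small, which follows from the density of the exceptional set by partial summation — is routine but must be done carefully. Everything else (the binomial characterisation, the lower bound on $N_t$, the dyadic decomposition in $q$, and the lower bound $\sum^\sharp 1/\varphi(t) \gg 1$) is elementary.
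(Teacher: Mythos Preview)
Your overall strategy --- restrict to primes $p\in[Q/2,Q]$, invoke Mikawa's almost-all result for $\pi(Q;t,1)-\pi(Q/2;t,1)$, and sum over squarefree $t\in[T,2T]$ --- is exactly what the paper does. The genuine gap is in your count of irreducible binomials. The paper establishes (Lemma~\ref{lem:ntq}) the exact formula
\[
N_t(q)=\frac{\varphi(t)}{t}\,(q-1)\qquad\text{whenever }\rad_4(t)\mid q-1,
\]
so for squarefree $t\mid p-1$ one has $N_t(p)=\dfrac{\varphi(t)}{t}(p-1)$. Keeping this factor explicitly and applying Mikawa yields
\[
Q\sum_{T\le t\le 2T}\hskip-22pt{\phantom{\sum}}^\sharp\ \frac{\varphi(t)}{t}\cdot\frac{Q}{\varphi(t)\log Q}
=\frac{Q^2}{\log Q}\sum_{T\le t\le 2T}\hskip-22pt{\phantom{\sum}}^\sharp\ \frac{1}{t}\gg\frac{Q^2}{\log Q},
\]
the point being that the $\varphi(t)/t$ coming from $N_t$ cancels the $1/\varphi(t)$ coming from Mikawa, leaving $\sum^\sharp 1/t\gg 1$.

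Your substitute lower bound $N_t(p)\gg\varphi\bigl((p-1)/t\bigr)\gg(p-1)/(t\log\log p)$ is valid but carries an extraneous factor $1/t$. Feeding it through the same argument gives only
\[
\gg\frac{Q^2}{\log Q\,\log\log Q}\sum_{T\le t\le 2T}\hskip-22pt{\phantom{\sum}}^\sharp\ \frac{1}{t\,\varphi(t)}
\asymp\frac{Q^2}{T\log Q},
\]
which is short by a full factor of $T$, not merely by $\log\log Q$ as you state; the slack in $\beta<17/32$ cannot absorb this. Your proposed fix of restricting to $t$ with $\varphi(t)/t\ge 1/2$ is irrelevant to the quantity $\varphi\bigl((p-1)/t\bigr)$ you are bounding, and the further restriction to primes $p$ for which $(p-1)/t$ has no small prime factor would require an input genuinely stronger than Mikawa's theorem. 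The correct repair is simply to use the exact formula for $N_t(q)$; once you have it, the proof is the two lines above.
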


We note that Theorem~\ref{thm:Lower q t} means that for a positive
proportion of fields $\F_q$ with $q \le Q$  there is a positive proportion of irreducible
binomials whose degrees do not exceed $Q^{\beta}$.

\section{Preparations}

\subsection{Characterisation of irreducible binomials}
Let $\ord_q a$ denote the multiplicative order of $a \in \F_q^*$.

Our main tool is the following characterisation of irreducible binomials
(see~\cite[Theorem~3.75]{LiNi}).

\begin{lem}
\label{lem:Irr Bin}
Let $t \ge 2$ be an integer and $a \in \F_q^*$. Then the binomial $x^t-a$ is irreducible in $\F_q[x]$ if and only if the following three conditions are satisfied:
\begin{enumerate}
\item $\rad(t) \mid \ord_q a$,
\item $ \gcd\(t,(q-1)/\ord_q a\)=1$,
\item if $4 \mid t$ then $q \equiv 1 \pmod 4$.
\end{enumerate}
\end{lem}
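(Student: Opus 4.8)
The statement is the Vahlen--Capelli irreducibility criterion specialised to finite fields, so the plan is to isolate a purely field-theoretic core and then perform an arithmetic translation into the language of the order of $a$ in the cyclic group $\F_q^*$.

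First I would establish the general criterion of Capelli: over an arbitrary field $K$ and for $a \in K^*$, the binomial $x^t - a$ is irreducible over $K$ if and only if (i) $a$ is not an $\ell$-th power in $K$ for every prime $\ell \mid t$, and (ii) if $4 \mid t$ then $a \notin -4(K^*)^4$. The necessity of (i) is immediate, since $a = b^\ell$ makes $x^\ell - a$, and hence $x^t - a$, split off a proper factor; the necessity of (ii) comes from the factorisation $x^4 + 4b^4 = (x^2 - 2bx + 2b^2)(x^2 + 2bx + 2b^2)$, so that $a = -4b^4$ forces $x^t - a$ to be reducible. The sufficiency is the substantial part: one shows that, under (i) and (ii), a root $\alpha$ of $x^t - a$ satisfies $[K(\alpha):K] = t$ by a radical-extension degree computation, the prime $2$ being exactly the case where the naive argument fails and the $-4$ correction is needed. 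This Kummer-type sufficiency argument is the main obstacle; everything after it is bookkeeping.

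Next I would translate condition (i) into conditions (1) and (2). Writing $\F_q^* = \langle g \rangle$ and $a = g^k$, set $e = \ord_q a = (q-1)/\gcd(k,q-1)$ and $m = (q-1)/e = \gcd(k,q-1)$. For a prime $\ell \mid t$ the element $a$ is an $\ell$-th power in $\F_q^*$ if and only if $\gcd(\ell,q-1) \mid k$, and a short case check (separating $\ell \mid q-1$ from $\ell \nmid q-1$ and using $q-1 = em$) shows that $a$ is \emph{not} an $\ell$-th power precisely when $\ell \mid e$ and $\ell \nmid m$. Letting $\ell$ range over all primes dividing $t$ turns the first condition into $\rad(t) \mid e$, which is (1), and the second into $\gcd(\rad(t),m) = \gcd(t,m) = 1$, which is (2). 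Hence (i) is equivalent to (1) together with (2).

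Finally I would treat condition (ii), the delicate point, under the assumption that $4 \mid t$ and (i) already holds; in particular $2 \mid \rad(t) \mid e$ forces $q$ to be odd and $a$ to be a non-square. The claim is that (ii) is then equivalent to $q \equiv 1 \pmod 4$, which is condition (3). I would argue through the $2$-adic valuation of $q-1$: if $q \equiv 3 \pmod 4$ then the fourth powers coincide with the squares and $-1$ is a non-square, so $-4(\F_q^*)^4$ is exactly the set of non-squares and therefore contains $a$, making (ii) fail; if $q \equiv 1 \pmod 4$ then $-4$ is a square and every element of $-4(\F_q^*)^4$ is a square, whereas $a$ is not, so (ii) holds automatically. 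Combining the three translations gives the asserted equivalence. As an alternative to invoking Capelli one can argue directly over $\F_q$ by computing $\ord \alpha$ and locating the least $d$ with $\alpha \in \F_{q^d}$, but the $2$-power exponent still demands the same $-4$ subtlety, so the difficulty is intrinsic rather than an artefact of the chosen route.
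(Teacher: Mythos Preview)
The paper does not actually prove this lemma: it is quoted verbatim as \cite[Theorem~3.75]{LiNi} and used as a black box. Your proposal supplies a genuine proof where the paper gives none, and the argument you outline---reduce to the Vahlen--Capelli criterion over an arbitrary field, then translate the two Capelli conditions into the three order-theoretic conditions via the cyclic structure of $\F_q^*$---is both correct and essentially the classical route (it is, in fact, close to how Lidl and Niederreiter themselves argue). The arithmetic translation in your second and third paragraphs checks out: for a prime $\ell\mid t$ one has $a\notin(\F_q^*)^\ell$ precisely when $\ell\mid e$ and $\ell\nmid(q-1)/e$, which packages into $\rad(t)\mid e$ and $\gcd(t,(q-1)/e)=1$; and under $4\mid t$ with $a$ a non-square, the set $-4(\F_q^*)^4$ equals the non-squares when $q\equiv 3\pmod 4$ and sits inside the squares when $q\equiv 1\pmod 4$, giving condition~(3). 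So there is nothing to correct---you have simply done more than the paper asked.
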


\begin{lem}\label{lem:ntq}
Suppose that $q$ is a prime power.
Then
$$N_t(q)=\begin{cases} \displaystyle{\frac{\varphi(t)}{t}(q-1)}, &
 \text{if } \rad_4(t)\mid (q-1) , \\
0,& \text{otherwise}.
\end{cases}$$
\end{lem}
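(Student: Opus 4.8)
The plan is to invoke the characterisation of Lemma~\ref{lem:Irr Bin} and count the elements $a\in\F_q^*$ for which $X^t-a$ is irreducible by grouping them according to their multiplicative order $\ord_q a$. We may assume $t\ge 2$, the case $t=1$ being immediate since then every $X-a$ with $a\in\F_q^*$ is irreducible and $\rad_4(1)=1$. Since $\F_q^*$ is cyclic of order $q-1$, for each divisor $d\mid q-1$ there are exactly $\varphi(d)$ elements $a\in\F_q^*$ with $\ord_q a=d$. Condition~(3) of Lemma~\ref{lem:Irr Bin} depends on $q$ alone, so if $4\mid t$ and $q\not\equiv 1\pmod 4$ then $N_t(q)=0$; otherwise
$$
N_t(q)=\sum_{\substack{d\mid q-1\\ \rad(t)\mid d,\ \gcd(t,\,(q-1)/d)=1}}\varphi(d).
$$

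First I would determine which divisors $d\mid q-1$ occur in this sum. Working one prime at a time, the condition $\gcd(t,(q-1)/d)=1$ says that $v_p(d)=v_p(q-1)$ for every prime $p\mid t$, while $\rad(t)\mid d$ says $v_p(d)\ge 1$ for every such $p$; hence the sum is nonempty if and only if $\rad(t)\mid q-1$, and in that case its terms are exactly the $d=m d'$ with $d'\mid n$, where $m=\prod_{p\mid t}p^{v_p(q-1)}$ is the largest divisor of $q-1$ supported on the primes dividing $t$ and $n=(q-1)/m$. Since $\gcd(m,n)=1$ and $\sum_{d'\mid n}\varphi(d')=n$, multiplicativity of $\varphi$ gives
$$
N_t(q)=\sum_{d'\mid n}\varphi(md')=\varphi(m)\sum_{d'\mid n}\varphi(d')=\varphi(m)\,n=(q-1)\frac{\varphi(m)}{m}.
$$
As $\rad(t)\mid q-1$ forces $\rad(m)=\rad(t)$, and $\varphi(k)/k=\prod_{p\mid k}(1-1/p)$ depends only on $\rad(k)$, this equals $(q-1)\varphi(t)/t$.

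It remains to fold in condition~(3) and to check that the resulting case distinction is governed by $\rad_4(t)$. If $4\nmid t$, condition~(3) is vacuous and $\rad_4(t)=\rad(t)$, so the above yields $N_t(q)=(q-1)\varphi(t)/t$ when $\rad_4(t)\mid q-1$ and $N_t(q)=0$ otherwise. If $4\mid t$, then $2\mid\rad(t)$, so $\rad(t)/2$ is odd and $\rad_4(t)=2\rad(t)=\lcm(4,\rad(t))$; here the extra requirement $q\equiv 1\pmod 4$, i.e.\ $4\mid q-1$, combined with $\rad(t)\mid q-1$ is precisely the condition $\rad_4(t)\mid q-1$, and when it fails $N_t(q)=0$. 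In both cases we obtain the stated formula. All steps are elementary; the point requiring the most care — and the one I would verify most carefully — is the prime-by-prime analysis of the two coprimality/divisibility constraints, together with the check that the congruence condition for $4\mid t$ is exactly what turns $\rad(t)\mid q-1$ into $\rad_4(t)\mid q-1$.
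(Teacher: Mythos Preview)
Your proof is correct and follows essentially the same route as the paper. Both arguments group elements by order to reduce to the divisor sum $\sum_{d}\varphi(d)$ over $d\mid q-1$ with $\rad(t)\mid d$ and $\gcd(t,(q-1)/d)=1$, then factor $q-1=mn$ (the paper writes $q-1=SR$ with $S=m$, $R=n$) into the part supported on primes of $t$ and its coprime complement, and evaluate the sum as $\varphi(m)n=(q-1)\varphi(t)/t$; your prime-by-prime justification via $p$-adic valuations and your explicit verification that $\rad_4(t)\mid q-1$ is equivalent to ``$\rad(t)\mid q-1$ and, if $4\mid t$, also $4\mid q-1$'' are slightly more detailed than the paper's treatment but not a different approach.
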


\begin{proof} We can assume that $\rad_4(t)\mid (q-1)$(or equivalently
$\rad(t)\mid (q-1)$ and if $4\mid t$ then $q \equiv 1 \pmod 4$), as in
the opposite case the result is follows immediately from Lemma~\ref{lem:Irr Bin}.

Furthermore, from Lemma~\ref{lem:Irr Bin} we see that
$$N_t(q)=\sum_{\substack{a \in \F_q^*\\ \rad(t)\mid \ord_qa \\ \gcd(t,(q-1)/\ord_qa)=1}}1.$$
Since $\F_q^*$ is a cyclic group,  there are $\varphi(\ord_qa)$ elements of $\F_q^*$
that have order equal to $\ord_qa$. Hence, we obtain
$$
N_t(q)=\sum_{\substack{ j \mid (q-1) \\ \rad(t)\mid j  \\ \gcd(t,(q-1)/j)=1}} \varphi(j).
$$
We now write $q-1=RS$, where $R$ is the largest divisor of $q-1$ with $\gcd(R, \rad(t)) =1$
(thus all prime divisors of  $S$  also divide $t$).
Now, for every integer $j\mid (q-1)$ the conditions $\rad(t)\mid j$ and $\gcd(t,(q-1)/j)=1$ mean that
$j=Sd$  for some $d\mid R$. Since $\gcd(S,R)=1$, we have
$$
N_t(q)=\sum_{d\mid R} \varphi(Sd)=\varphi(S) \sum_{d\mid R} \varphi(d)=\varphi(S)R=\frac{\varphi(t)}{t}SR=\frac{\varphi(t)}{t}(q-1),
$$
which concludes the proof.
\end{proof}

\subsection{Analytic number theory background}

We recall a quantitative version of the Linnik  theorem,
see~\cite[Corollary~18.8]{IwKow}, which is slightly  stronger
than the form which is usually used.

\begin{lem}\label{lem:Linnik}
There is an absolute constant $L$ such that if
a positive integer $k$ is sufficiently large and $Q\ge k^L$, then
uniformly over all  integers $a$ with $\gcd(k,a)=1$ we have
$$\psi(Q;k,a) \gg \frac{Q}{\varphi(k)\sqrt{k}}.$$
\end{lem}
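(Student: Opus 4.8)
The plan is to reconstruct the classical route to Linnik's theorem, deriving the lower bound from the truncated explicit formula together with the three standard analytic inputs for the Dirichlet $L$-functions modulo $k$. First I would observe that, up to the contribution of prime powers which is $O(Q^{1/2}\log Q)$ and hence negligible, one may pass to the full Chebyshev function, and by orthogonality of characters write
$$
\psi(Q;k,a)=\frac{1}{\varphi(k)}\sum_{\chi \bmod k}\overline\chi(a)\,\psi(Q,\chi),
\qquad \psi(Q,\chi)=\sum_{n\le Q}\Lambda(n)\chi(n).
$$
Applying the truncated explicit formula to each $\psi(Q,\chi)$ with a height parameter $T$ gives
$$
\psi(Q;k,a)=\frac{Q}{\varphi(k)}-\frac{1}{\varphi(k)}\sum_{\chi \bmod k}\overline\chi(a)\sum_{|\gamma|\le T}\frac{Q^{\rho}}{\rho}
+O\!\left(\frac{Q\log^2(kQ)}{T}\right),
$$
the inner sum running over non-trivial zeros $\rho=\beta+i\gamma$ of $L(s,\chi)$. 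Taking $T$ a small fixed power of $Q$ annihilates the error term, and the problem reduces to showing that the zero sum cannot erode the main term $Q/\varphi(k)$ by more than a factor $\sqrt{k}$.

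The three inputs I would invoke are: (i) the Landau--Page zero-free region, which frees $\sigma>1-c/\log(k(|t|+2))$ of zeros of all $L(s,\chi)$ save for at most one real (Siegel) zero $\beta_1$ of a unique real character $\chi_1$; (ii) Linnik's log-free zero-density estimate $\sum_{\chi \bmod k}N(\alpha,T,\chi)\ll (kT)^{c(1-\alpha)}$; and (iii) the Deuring--Heilbronn repulsion, which pushes every other zero away from $\sigma=1$ precisely when $\beta_1$ is abnormally close to $1$. In the \emph{generic case}, where no exceptional zero exists, (i) and (ii) together with partial summation over $\beta$ bound the zero sum by $\ll Q^{1-c'/\log k}$; since $Q\ge k^{L}$ turns this into a genuine power saving, one even recovers $\psi(Q;k,a)=(1+o(1))Q/\varphi(k)$, far stronger than asserted.

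The whole difficulty, and the origin of the factor $\sqrt{k}$, lives in the \emph{exceptional case}. Here I would isolate the Siegel contribution $-\varphi(k)^{-1}\overline\chi_1(a)Q^{\beta_1}/\beta_1$ and combine it with the main term, leaving essentially
$$
\frac{Q}{\varphi(k)}\Bigl(1-\chi_1(a)\,Q^{-(1-\beta_1)}\Bigr).
$$
When $\chi_1(a)=-1$ the two terms reinforce and the bound is immediate; the dangerous subcase is $\chi_1(a)=1$, where one is left with $Q\varphi(k)^{-1}\bigl(1-Q^{-(1-\beta_1)}\bigr)\gg Q\varphi(k)^{-1}\min\{1,(1-\beta_1)\log Q\}$. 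Feeding in the effective lower bound $1-\beta_1\gg k^{-1/2}(\log k)^{-O(1)}$ — which is unconditional and comes from $L(1,\chi_1)\gg k^{-1/2}$, and is exactly what keeps the constant $L$ absolute and effective — together with $\log Q\ge L\log k$, the surviving term is $\gg Q/(\varphi(k)\sqrt{k})$, up to logarithmic factors that the sharper bookkeeping of~\cite[Corollary~18.8]{IwKow} removes. At the same time the Deuring--Heilbronn phenomenon (iii) guarantees that, exactly in the regime where $\beta_1$ is this close to $1$, all remaining zeros are repelled far enough that their total contribution is of strictly smaller order than the surviving main term, so positivity is preserved.

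I expect the repulsion step to be the main obstacle. One must quantify Deuring--Heilbronn so that the saving it forces on the off-diagonal zeros is uniform in $k$ and strong enough to dominate the near-cancellation left by the exceptional zero, while simultaneously using only an \emph{effective} lower bound for $1-\beta_1$; the entire balance — the size of $1-\beta_1$ against the repulsion it induces — is the technical heart of Linnik's theorem and is what the cited corollary packages into the clean statement above.
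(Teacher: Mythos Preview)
Your sketch is essentially the standard route to Linnik's theorem as presented in Chapter~18 of Iwaniec--Kowalski, and the outline is sound: orthogonality, explicit formula, zero-free region, log-free density, Deuring--Heilbronn, and the effective bound $1-\beta_1\gg k^{-1/2}(\log k)^{-O(1)}$ really are the ingredients, and you have correctly identified that the $\sqrt{k}$ loss appears exactly in the subcase $\chi_1(a)=1$ with an exceptional zero present.

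That said, you should be aware that the paper does not prove this lemma at all. It is stated as a direct quotation of~\cite[Corollary~18.8]{IwKow} and used as a black box; there is no argument in the paper to compare against. So your proposal is not an alternative proof but rather a reconstruction of the cited result itself. If the intent of the exercise is to supply what the paper omits, your plan is appropriate and points to precisely the chapter being cited; if the intent is only to reproduce the paper's reasoning, then a one-line citation suffices.
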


On average over $k$ we have a much more precise
result given by the  {\it Bombieri--Vinogradov theorem\/}
which we present in the form that follows from
the work of  Dress,  Iwaniec, and  Tenenbaum~\cite{DIT}
combined with the method of Vaughan~\cite{Vau}:

\begin{lem}\label{lem:Bomb-Vin}
For any $A>0$, $\alpha > 3/2$ and  $T \le Q$ we
have $$
\sum_{t \le T} \max_{\gcd(a,t)=1} \max_{R \le Q}
\left|\pi(R;t,a) - \frac{\pi(R)}{\varphi(t)}\right| \le
Q (\log Q)^{-A} + Q^{1/2} T (\log Q)^{\alpha}.
$$
\end{lem}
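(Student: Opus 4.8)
The plan is to establish the bound first for the von Mangoldt weighted counting function $\psi(R,\chi)=\sum_{n\le R}\Lambda(n)\chi(n)$ and then to transfer it to $\pi$ by partial summation. For $\gcd(a,t)=1$ the orthogonality of Dirichlet characters modulo $t$ gives
\[
\psi(R;t,a)-\frac{\psi(R)}{\varphi(t)}=\frac{1}{\varphi(t)}\sum_{\chi\neq\chi_{0}}\overline{\chi}(a)\,\psi(R,\chi),
\]
so the maximum over $a$ is removed once we bound $|\psi(R,\chi)|$ for each nonprincipal $\chi$. Writing each $\chi$ through the primitive character of conductor $k\mid t$ that induces it changes $\psi(R,\chi)$ by $O((\log Q)^{2})$, and after interchanging the order of summation one is reduced to estimating
\[
(\log Q)\sum_{k\le T}\frac{1}{\varphi(k)}\sum_{\chi\bmod k}^{*}\max_{R\le Q}\bigl|\psi(R,\chi)\bigr|,
\]
the star indicating that $\chi$ runs over primitive characters. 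A bound of the shape $Q(\log Q)^{-A}+Q^{1/2}T(\log Q)^{\alpha}$ for this quantity gives the lemma for the weighted sum; partial summation against $1/\log u$ then transfers it to $\pi(R;t,a)-\pi(R)/\varphi(t)$ with the same two-term shape and the same inner maxima, the prime-power contributions being negligible.

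I would next split the primitive characters according to the size of the conductor $k$. For \emph{small} conductors, say $k\le(\log Q)^{2A}$, the Siegel--Walfisz theorem gives $\max_{R\le Q}|\psi(R,\chi)|\ll Q\exp(-c\sqrt{\log Q})$ uniformly; since the number of such characters is at most a fixed power of $\log Q$, their whole contribution is bounded by $Q\exp(-c\sqrt{\log Q})$ times a power of $\log Q$, hence by $Q(\log Q)^{-A}$, which is the first term of the lemma. The \emph{large} conductors $k>(\log Q)^{2A}$ are where the real work lies.

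For the large conductors I would expand $\Lambda(n)\chi(n)$ by \emph{Vaughan's identity}~\cite{Vau} with truncation parameters $U,V$, decomposing $\psi(R,\chi)$ into a bounded number of \emph{Type I} sums $\sum_{m\le U}c_{m}\sum_{n\le R/m}\chi(n)$ with smooth outer coefficients and genuinely bilinear \emph{Type II} sums $\sum_{m}\sum_{n}a_{m}b_{n}\chi(mn)$, both restricted to $mn\le R$ and with the variables in dyadic ranges. The key input is the multiplicative large sieve inequality
\[
\sum_{k\le T}\frac{k}{\varphi(k)}\sum_{\chi\bmod k}^{*}\Bigl|\sum_{M<n\le M+N}\xi_{n}\chi(n)\Bigr|^{2}\le\bigl(T^{2}+N\bigr)\sum_{M<n\le M+N}|\xi_{n}|^{2},
\]
applied, after Cauchy--Schwarz, to the Type II forms, and, together with the P\'olya--Vinogradov inequality, to the Type I forms; the maximum over $R\le Q$ is incorporated through a maximal variant of the large sieve at a logarithmic cost. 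Summing over the $O(\log Q)$ dyadic ranges and passing from this mean square to the first moment by a further Cauchy--Schwarz, the ``$T^{2}$'' term of the large sieve produces $Q^{1/2}T(\log Q)^{\alpha}$, while the diagonal ``$N$'' term and the intermediate secondary terms are all dominated by $\max\{Q(\log Q)^{-A},Q^{1/2}T(\log Q)^{\alpha}\}$.

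The main obstacle is controlling the accumulated power of $\log Q$ so that every $\alpha>3/2$ is admissible; this is exactly the content of the refinement of Dress, Iwaniec, and Tenenbaum~\cite{DIT}. The dyadic decomposition, the divisor-type weights $c_{m}$ in the Type I coefficients, the completion used to handle $\max_{R\le Q}$, and the two applications of Cauchy--Schwarz each contribute logarithmic factors, so a crude count leaves an exponent well above $3/2$. The sharpening consists in arranging the Vaughan decomposition and the large-sieve application so that these losses are consolidated rather than multiplied---exploiting the smoothness of the Type I weights and a careful dyadic summation---leaving essentially only the $(\log Q)^{3/2}$ intrinsic to the ``$T^{2}$'' branch. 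Since the statement requires only the strict inequality $\alpha>3/2$, there is enough slack to avoid the delicate endpoint analysis, and the two-term bound follows.
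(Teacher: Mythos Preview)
The paper does not give its own proof of this lemma at all: it simply states the result and attributes it to ``the work of Dress, Iwaniec, and Tenenbaum~\cite{DIT} combined with the method of Vaughan~\cite{Vau}.'' Your sketch is an accurate outline of precisely that proof---character orthogonality and reduction to primitive characters, Siegel--Walfisz for small conductors, Vaughan's identity plus the multiplicative large sieve for large conductors, with the Dress--Iwaniec--Tenenbaum refinement controlling the logarithmic exponent down to any $\alpha>3/2$---so you have correctly reconstructed what the paper is invoking.
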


The following result follows immediately from much
more general estimates of Mikawa~\cite[Bounds~(4) and~(5)]{Mik}.

\begin{lem}
\label{lem:Mikawa}
For any fixed $\beta < 17/32$, $u  \le  z^\beta$ and for
all but $o(u)$ integers $k \in [u, 2u]$ we have
$$
\pi(2z;k,1) - \pi(z;k,1) \gg  \frac{z}{\varphi(k)\log z} .
$$
\end{lem}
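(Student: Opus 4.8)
The plan is to obtain the lemma as a more or less formal consequence of the short-interval, averaged-over-moduli estimates of Mikawa, together with a Chebyshev-type argument that upgrades an average bound to a pointwise bound outside a sparse exceptional set. In the special case relevant here (residue class $a=1$ and the dyadic interval $(z,2z]$), Mikawa's bounds~(4) and~(5) of~\cite{Mik} furnish a Bombieri--Vinogradov-type inequality of the shape
$$
\sum_{u < k \le 2u} \left| \psi(2z;k,1) - \psi(z;k,1) - \frac{z}{\varphi(k)} \right| \ll \frac{z}{(\log z)^A}
$$
for every fixed $A>0$, uniformly for $u \le z^\beta$ with $\beta < 17/32$. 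First I would pass from $\psi$ to $\pi$: the proper prime powers $p^m \equiv 1 \pmod k$ with $m \ge 2$ contribute $O(z^{1/2})$ in total, and stripping the von Mangoldt weight by partial summation (using $\log p \asymp \log z$ for $p \in (z,2z]$) turns the display above into the corresponding estimate for $\pi(2z;k,1) - \pi(z;k,1)$ with main term $(\pi(2z)-\pi(z))/\varphi(k)$, where $\pi(2z)-\pi(z) \sim z/\log z$ by the prime number theorem.

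With this in hand the counting argument is routine. For $u < k \le 2u$ set
$$
E(k) = \left| \bigl(\pi(2z;k,1) - \pi(z;k,1)\bigr) - \frac{\pi(2z) - \pi(z)}{\varphi(k)} \right|,
$$
and let $\cB$ be the set of those $k$ for which $\pi(2z;k,1) - \pi(z;k,1) \le \tfrac12 (\pi(2z)-\pi(z))/\varphi(k)$. For $k \in \cB$ one has $E(k) \ge \tfrac12 (\pi(2z)-\pi(z))/\varphi(k) \gg z/(u\log z)$, using $\varphi(k) \le k \le 2u$ and the prime number theorem. Summing over $\cB$ and invoking the transferred Mikawa bound,
$$
|\cB| \cdot \frac{z}{u \log z} \ll \sum_{k \in \cB} E(k) \le \sum_{u < k \le 2u} E(k) \ll \frac{z}{(\log z)^A},
$$
so that $|\cB| \ll u (\log z)^{1-A} = o(u)$ as soon as $A > 1$. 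For every $k \in [u,2u] \setminus \cB$ the complementary inequality gives $\pi(2z;k,1) - \pi(z;k,1) \gg z/(\varphi(k)\log z)$, which is precisely the asserted lower bound outside an exceptional set of size $o(u)$.

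The genuinely substantive input is Mikawa's estimate itself, whose proof rests on deep analytic machinery (large sieve and Kloosterman-sum techniques underlying the exponent $17/32$); here I would simply cite bounds~(4) and~(5) of~\cite{Mik}. Within the present deduction the only points needing care, and hence the main obstacles, are: (i) verifying that Mikawa's more general estimate specialises to $a=1$ and to the interval $(z,2z]$ with the stated uniformity in $u \le z^\beta$; and (ii) controlling the $\psi$-to-$\pi$ conversion well enough that the resulting error stays below the $z/(u\log z)$ threshold used to bound $|\cB|$. Neither is serious, so the lemma is in essence a repackaging of Mikawa's theorem via Chebyshev's inequality.
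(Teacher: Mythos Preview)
Your proposal is correct and aligns with the paper's approach: the paper gives no proof beyond the sentence that the lemma ``follows immediately from much more general estimates of Mikawa~\cite[Bounds~(4) and~(5)]{Mik}'', and you have simply supplied the routine Chebyshev/Markov deduction from an $L^1$ average that this phrase is meant to encode. The $\psi$-to-$\pi$ conversion and the exceptional-set counting are exactly the standard details one would fill in, so there is nothing to compare---you have just been more explicit than the paper.
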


We also have a bound on the number
$\rho_T(n)$ of integers $t\le T$ with $\rad(t) \mid n$,
which is due to Grigoriev and Tenenbaum~\cite[Theorem~2.1]{GrigTen}.
We note that~\cite[Theorem~2.1]{GrigTen} is formulated as a bound on
the number of divisors $t \mid n$ with $t \le T$.
However a direct examination of the argument reveals that it actually
provides an estimate for the above function $\rho_T(n)$. In fact we present
it in simpler form given by~\cite[Corollary~2.3]{GrigTen}

\begin{lem}\label{lem:SmallDiv} For any fixed positive $A$ and $\varepsilon$
and a sufficiently large positive integer $n$ and  a real $T$ with
$$
T \ge \(\log n\)^{(1+ \varepsilon) A \log_3 n/\log_4 n}
$$
we have $\rho_T(n)\le T/(\log T)^A$.
\end{lem}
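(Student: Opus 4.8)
The statement is precisely Corollary~2.3 of Grigoriev and Tenenbaum~\cite{GrigTen}; rather than merely cite it, let me sketch a self-contained derivation by \emph{Rankin's method}, which also explains the shape of the threshold on $T$. Since $\rho_T(n)$ depends only on $\rad(n)$ and the condition $\rad(t)\mid n$ is multiplicative, the first step is Rankin's trick: for any $\sigma\in(0,1)$, using $1\le(T/t)^{\sigma}$ for $t\le T$,
$$\rho_T(n)\le T^{\sigma}\sum_{\rad(t)\mid n}t^{-\sigma}=T^{\sigma}\prod_{p\mid n}\left(1-p^{-\sigma}\right)^{-1}.$$
Taking logarithms and using $-\log(1-x)=x+O(x^2)$ for $0\le x\le 2^{-1/2}$ (valid once $\sigma\ge1/2$, the quadratic terms summing to $O(\sum_p p^{-2\sigma})=O(1)$), this reduces everything to the prime sum:
$$\log\rho_T(n)\le \sigma\log T+\sum_{p\mid n}p^{-\sigma}+O(1).$$

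Next I would bound the prime sum uniformly in the factorisation of $n$. Writing the distinct prime divisors of $n$ as $q_1<\dots<q_k$ with $k=\omega(n)$, the bound $q_j\ge p_j$ ($j$-th prime) and the monotonicity of $p\mapsto p^{-\sigma}$ give $\sum_{p\mid n}p^{-\sigma}\le\sum_{p\le y}p^{-\sigma}$ with $y=p_k$. Since $\rad(n)\mid n$ forces $\theta(y)=\sum_{p\le y}\log p\le\log n$, the prime number theorem yields $y\le(1+o(1))\log n$, hence $\log y\le(1+o(1))\log_2 n$. Writing $\sigma=1-\eta$ and using partial summation together with $\theta(x)=x+o(x)$, in the regime $\eta\log y\to\infty$ one gets
$$\sum_{p\le y}p^{-\sigma}=(1+o(1))\frac{y^{\eta}}{\eta\log y}.$$
The case of bounded $\omega(n)$ is trivial, since then the prime sum is $O(1)$; so I may assume $\omega(n)\to\infty$, which is exactly what makes the regime $\eta\log y\to\infty$ attainable.

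The decisive step is the choice of $\eta$. I would take $\eta=c\,\log_4 n/\log_2 n$ with a constant $c$ satisfying $1/(1+\varepsilon)<c<1$. Then $y^{\eta}=\exp(\eta\log y)=(\log_3 n)^{c(1+o(1))}$, so, as $c<1$,
$$\frac{y^{\eta}}{\eta\log y}=\frac{(\log_3 n)^{c(1+o(1))}}{c\,\log_4 n}=o(\log_3 n).$$
Meanwhile the hypothesis rewrites as $\log T\ge(1+\varepsilon)A\,\log_2 n\,\log_3 n/\log_4 n$, whence $\eta\log T\ge c(1+\varepsilon)A\log_3 n$, while $\log_2 T=(1+o(1))\log_3 n$ gives $A\log_2 T\le(1+o(1))A\log_3 n$. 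Assembling the three inequalities,
$$\log\rho_T(n)\le\log T-\eta\log T+o(\log_3 n)+O(1)\le\log T-c(1+\varepsilon)A\log_3 n+o(\log_3 n).$$
Since $c(1+\varepsilon)$ is a fixed constant strictly larger than $1$, the right-hand side is at most $\log T-A\log_2 T=\log\!\big(T/(\log T)^A\big)$ for all large $n$, which is the claim. A one-line monotonicity check in $\lambda=\log T/\log T_0$ shows this single choice of $\eta$ works for every $T$ above the threshold $T_0$, not merely at $T_0$.

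The main obstacle here is bookkeeping rather than ideas: one must keep the four nested logarithms aligned so that the Rankin gain $\eta\log T$ just overtakes the loss $A\log_2 T$ while the Euler-product cost $y^{\eta}/(\eta\log y)$ stays negligible. This is precisely what pins down $\eta\asymp\log_4 n/\log_2 n$ and what forces the peculiar threshold exponent $(1+\varepsilon)A\log_3 n/\log_4 n$ — a larger $\eta$ blows up the product, a smaller $\eta$ fails to beat $A\log_2 T$. The only genuinely analytic input is the partial-summation asymptotic for $\sum_{p\le y}p^{-\sigma}$ in the range $\eta\log y\to\infty$, which needs care because $\eta\to0$.
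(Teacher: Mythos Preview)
The paper does not prove this lemma at all: it simply quotes \cite[Corollary~2.3]{GrigTen} after remarking that the argument of \cite[Theorem~2.1]{GrigTen} actually bounds $\rho_T(n)$ rather than the ordinary divisor count. Your proposal goes further and supplies a self-contained Rankin-type proof, which is presumably close to what Grigoriev and Tenenbaum themselves do; in that sense you are not taking a different route from the paper so much as filling in what the paper deliberately omits. The payoff of writing it out is exactly what you say: it explains why the threshold $(\log n)^{(1+\varepsilon)A\log_3 n/\log_4 n}$ is the natural one, namely the point at which the Rankin gain $\eta\log T$ first overtakes $A\log_2 T$ with the optimal $\eta\asymp\log_4 n/\log_2 n$.

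There is one small slip worth fixing. You assert that assuming $\omega(n)\to\infty$ ``is exactly what makes the regime $\eta\log y\to\infty$ attainable'', with $y=p_{\omega(n)}$. This is not quite true: if $\omega(n)\to\infty$ slowly (say $\omega(n)\asymp(\log_2 n)^{1/2}$) then $\log y$ is only of order $\log_3 n$, so $\eta\log y\asymp\log_3 n\cdot\log_4 n/\log_2 n\to 0$, and your asymptotic $\sum_{p\le y}p^{-\sigma}\sim y^{\eta}/(\eta\log y)$ is not available. The cure is immediate and in fact simplifies the argument: since $\sum_{p\mid n}p^{-\sigma}\le\sum_{p\le Y}p^{-\sigma}$ with $Y:=2\log n$ (this uses only $\theta(p_{\omega(n)})\le\log n$ and Chebyshev, no case split on $\omega(n)$ needed), and since $\eta\log Y=c(1+o(1))\log_4 n\to\infty$ unconditionally, your partial-summation asymptotic applies directly to $\sum_{p\le Y}p^{-\sigma}$ and gives the $o(\log_3 n)$ bound you want. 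With this tweak the rest of your argument --- including the monotonicity of $T\mapsto\eta\log T-A\log_2 T$ for $T\ge T_0$ --- goes through cleanly.
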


\section{Proofs of Main Results}

\subsection{Proof of Theorem~\ref{thm:UppBound q}}

For the case where $4\nmid t$ we   denote $s = \rad(t)$. Using Lemma~\ref{lem:ntq} we have
\begin{equation}
\label{eq q-1 q}
\sum_{q \le Q}N_t(q)=\frac{\varphi(t)}{t}\sum_{\substack{q \le Q \\ s \mid (q-1)}}(q-1)
= \frac{\varphi(t)}{t}\sum_{\substack{q \le Q\\ s\mid (q-1)}}q + O(Q/s).
\end{equation}

So, with
$$
\ell  = \fl{\frac{\log Q}{\log 2}} \mand \lambda = 
2\varepsilon^{-1},
$$
we have
\begin{equation}
\label{eq:prime pow}
\sum_{\substack{q \le Q\\ s\mid (q-1)}}q  =
\sum_{\substack{p \le Q\\ s\mid (p-1)}}p + \sum_{2 \le r \le \ell}
\sum_{\substack{p^r\le Q\\ s\mid (p^r-1)}}p^r.
\end{equation}
Using the Brun-Titchmarsh bound, see~\cite[Theorem~6.6]{IwKow} and
partial summation we obtain
\begin{equation}
\label{eq:r=1}
\sum_{\substack{p \le Q\\ s\mid (p-1)}}p \le (1+o(1)) \frac{Q^2}{\varphi(s) \log (Q/s)},
\end{equation}
provided that $s/Q \to 0$.

We now estimate the contribution from other terms with $r \ge 2$.

The condition $s\mid p^r-1$ puts $p$ in at most $r^{\omega(s)}$ arithmetic progressions
modulo $s$. Extending the summation to all integers $n \le Q^{1/r}$ in these progressions,
we have
$$
\sum_{\substack{p^r\le Q\\ s\mid (p^r-1)}}
p^r  \ll  r^{\omega(s)}   Q(Q^{1/r} s^{-1}+1).
$$
We use this bound for $r \le \lambda$. Since
$$
\omega(s) \ll \frac{\log s}{\log \log (s+2)},
$$
for $r \le \lambda$ we have
$$
r^{\omega(s)} = \exp\( O\( \frac{\log s}{\log \log   (s+2)}\)\).
$$
The total contribution from all terms with $2 \le r \le \lambda$ is at most
\begin{equation}
\label{eq:small r}
\begin{split}
\sum_{2 \le r \le \lambda}
\sum_{\substack{p^r\le Q\\ s\mid (p^r-1)}}p^r
& \le Q(Q^{1/2} s^{-1} +1) \exp\( O\(\frac{\log s}{\log \log  (s+2)}\)\) \\
& = Q^{1+o(1)}(Q^{1/2} s^{-1} +1).
\end{split}
\end{equation}
For $\lambda \le r \le \ell$ we use the trivial bound
\begin{equation}
\label{eq:big r}
\sum_{\lambda \le r \le \ell}
\sum_{\substack{p^r\le Q\\ s\mid (p^r-1)}}p^r \le \ell Q^{1+1/\lambda}.
\end{equation}

Combining~\eqref{eq:small r} and~\eqref{eq:big r} we see that
\begin{equation}
\label{eq:sum pr}
\begin{split}
 \sum_{2 \le r \le \ell}
\sum_{\substack{p^r\le Q\\ s\mid (p^r-1)}}p^r
&\ll Q^{3/2 + o(1)} s^{-1} + Q^{1+o(1)} + Q^{1+ \varepsilon/2} \log Q\\
&\ll  Q^{3/2 + o(1)} s^{-1},
\end{split}
\end{equation}
provided that $s \le Q^{1-\varepsilon}$ and $Q \to \infty$.
Recalling~\eqref{eq q-1 q}, \eqref{eq:prime pow} and~\eqref{eq:r=1} and
that
$$
\frac{\varphi(t)}{t \varphi(s)} = \frac{1}{s},
$$
we conclude the proof for the case where $4 \nmid t$.

In the event that $4 \mid t$ then, returning to~\eqref{eq q-1 q}, we have
$$
\sum_{q \le Q}N_t(q)=\frac{\varphi(t)}{t}\sum_{\substack{q \le Q \\ s \mid (q-1)\\4 \mid (q-1)}}(q-1)
=\frac{\varphi(t)}{t}\sum_{\substack{q \le Q \\ \lcm(4,\rad(t)) \mid (q-1)}}(q-1).
$$
Since $\lcm(4,\rad(t))=2\rad(t)$, the proof now continues as before, replacing $s$ with $2s$.

\subsection{Proof of Theorem~\ref{thm:LowBound q}}

Combining~\eqref{eq q-1 q} and~\eqref{eq:prime pow},  we have
\begin{equation}
\begin{split}
\label{eq sum p upper bound 2}
\sum_{q \le Q} N_t(q) \ge \sum_{p\le Q} N_t(p)
&=\frac{\varphi(t)}{t} \sum_{\substack{p \le Q\\\rad_4(t)\mid (p-1)}}(p-1) \\
&\ge \frac{\varphi(t)}{t} \sum_{\substack{p \le Q\\ 2s\mid (p-1)}}(p-1) ,
\end{split}
\end{equation}
where, as before, $s = \rad(t)$.

It immediately follows from Lemma~\ref{lem:Linnik} that
$$
\pi(Q;2s,1)\gg  \frac{Q}{\varphi(2s)\sqrt{2s} \log Q} \ge \frac{Q}{\varphi(s)\sqrt{s} \log Q}.
$$
Thus
$$
\sum_{\substack{p \le Q\\2s\mid (p-1)}}p  \ge \sum_{k=1}^{\pi(Q; s,1)}\(2ks+1\)\ge 2s\frac{\pi(Q; s,1)^2}{2} \gg\frac{Q^2}{\varphi^2(s)(\log Q)^2}.
$$

Combining this lower bound with~\eqref{eq sum p upper bound 2} completes the proof.

\subsection{Proof of Theorem~\ref{thm:UppBound t}}

Fix any positive $T$ and $q$. For $q-1\equiv 0 \pmod 4$ we have, using
Lemma~\ref{lem:ntq},
\begin{equation}\label{eq q-1=0}
\sum_{t \le T}N_t(q)=(q-1)\sum_{\substack{t \le T \\\rad(t)|(q-1)}}\frac{\varphi(t)}{t}\le (q-1)\sum_{\substack{t \le T \\\rad(t)|(q-1)}}1.
\end{equation}
For $q-1 \not\equiv 0 \pmod 4$ we have , using Lemma ~\ref{lem:ntq},
\begin{equation}
\begin{split}
\label{eq q-1not=0}
\sum_{t \le T}N_t(q)&=(q-1)\sum_{\substack{t \le T \\\rad(t)|(q-1)\\4 \nmid t}}\frac{\varphi(t)}{t}\le (q-1)\sum_{\substack{t \le T \\\rad(t)|(q-1)}}\frac{\varphi(t)}{t}\\& \le (q-1)\sum_{\substack{t \le T \\\rad(t)|(q-1)}}1.
\end{split}
\end{equation}

Combining~\eqref{eq q-1=0}, \eqref{eq q-1not=0} and Lemma~\ref{lem:SmallDiv} completes the proof.

\subsection{Proof of Theorem~\ref{thm:Asymp q t}}

 Using~\eqref{eq q-1 q}, \eqref{eq:prime pow}
 and~\eqref{eq:sum pr} we have
 \begin{equation}
\begin{split}
 \label{double sum nmid}
\sum_{t \le T}\hskip-18 pt{\phantom{\sum}}^\sharp\, \sum_{q\le Q} N_t(q)
&=
\sum_{t \le T}\hskip-18 pt{\phantom{\sum}}^\sharp\,\frac{\varphi(t)}{t} \sum_{\substack{p\le Q\\ t\mid(p-1)}}p+
 O\(Q^{3/2+o(1)} \sum_{ t\le T} t^{-1} \) \\
 &=\sum_{t \le T}\hskip-18 pt{\phantom{\sum}}^\sharp\,\frac{\varphi(t)}{t}
 \sum_{\substack{p\le Q\\ t \mid(p-1)}}p + O\(Q^{3/2+o(1)} \),
\end{split}
\end{equation}
as $T \le Q^{1/2}$.

 Using partial summation we have
\begin{equation}
\label{eq Sum P1}
\sum_{\substack{p\le Q\\ t\mid(p-1)}}p =(Kt+1)\pi(Kt+1;t,1)-t\sum_{1 \le k \le K}\pi(kt;t,1),
\end{equation}
where $K=\fl{(Q-1)/t}$.

We now write
$$
\cE(Q,t) =   \max_{R\le Q} \left|\pi(R;t,1) - \frac{\pi(R)}{\varphi(t)}\right|.
$$

With this notation we derive from~\eqref{eq Sum P1}  that
\begin{equation}
\label{eq Sum P2}
\sum_{\substack{p\le Q\\t\mid(p-1)}}p  =
\frac{Q \pi(Q)}{\varphi(t)}
- \frac{t}{\varphi(t)} \sum_{1 \le k \le K}  \pi(kt)+O\(tK\cE(Q,t)\).
\end{equation}
By the prime number theorem and~\cite[Corollary~5.29]{IwKow}, and noting that
for $1 \le k \le K$ we have $kt\le Q$,
we also conclude that
\begin{equation*}
\begin{split}
 \sum_{1 \le k \le K}  \pi(kt) & =
t \sum_{1 \le k \le K}  \frac{k}{\log (kt)} +O(Q^2(\log Q)^{-2})\\
 & =
t \sum_{K/(\log Q)^2 \le k \le K}  \frac{k}{\log (kt)} +O(Q^2(\log Q)^{-2}).
\end{split}
\end{equation*}

Now, for $K/(\log Q)^2 \le k \le K$ we have
$$
 \frac{1}{\log (kt)} =  \frac{1}{\log Q + O(\log \log Q)}
 =   \frac{1}{\log Q} +  O\(\frac{\log \log Q}{(\log Q)^2}\).
$$
Therefore
\begin{equation*}
\begin{split}
 \sum_{1 \le k \le K}  \pi(kt) & = \(\frac{1}{2} + o(1)\)  \frac{t}{ \log Q} K^2
 = \(\frac{1}{2} + o(1)\) \frac{Q^2}{t\log Q}.
\end{split}
\end{equation*}

Substituting this in~\eqref{eq Sum P2}  and using
$\pi(Q) \sim Q/\log Q$, we obtain
$$
\sum_{\substack{p\le Q\\t\mid(p-1)}}p  =  \(\frac{1}{2} + o(1)\) \frac{Q^2}{\varphi(t) \log Q} + O\(Q\cE(Q,t) \) .
$$
Using this bound in~\eqref{double sum nmid}   yields
\begin{equation*}
\begin{split}
 \sum_{t \le T}\hskip-18 pt{\phantom{\sum}}^\sharp\,\sum_{q\le Q}N_t(q) &
 = \(\frac{1}{2} + o(1)\)
\frac{Q^2}{2 \log Q}
 \sum_{t \le T}\hskip-18 pt{\phantom{\sum}}^\sharp\,\frac{1}{t} \\
& \qquad \qquad    +
 O\(Q^{3/2+O(1)}+Q \sum_{t\le T }\cE(Q,t)\).
\end{split}
\end{equation*}
By Lemma~\ref{lem:Bomb-Vin}, with  $A=1+\varepsilon$ and $\alpha = 3/2 + \varepsilon/2$, there is some $B>0$
such that
$$
\sum_{t \le T} \cE(Q,t) \ll  Q (\log Q)^{-A} + Q^{1/2} T (\log Q)^{\alpha}
\ll  Q (\log Q)^{-1-\varepsilon/2}.
$$
Hence
\begin{equation}
\label{eq: sumTQ}
\sum_{t \le T}\hskip-18 pt{\phantom{\sum}}^\sharp\,\sum_{q\le Q}N_t(q)=
  \(\frac{1}{2} + o(1)\)\frac{Q^2}{ \log Q}
 \sum_{t \le T}\hskip-18 pt{\phantom{\sum}}^\sharp\,\frac{1}{t} +
 O\(Q (\log Q)^{-1-\varepsilon/2}\).
\end{equation}
A simple inclusion-exclusion argument leads to the asymptotic formula
\begin{equation}
\label{eq: harmonic sf}
\sum_{t \le T}\hskip-18 pt{\phantom{\sum}}^\sharp\, \frac{1}{t}=
\(\frac{1}{\zeta(2)} + o(1)\)\log T ,
\end{equation}
see~\cite{Sur} for a much more precise result.
Substituting~\eqref{eq: harmonic sf} into~\eqref{eq: sumTQ} completes the proof.

\subsection{Proof of Theorem~\ref{thm:Lower q t}}

We proceed as in the proof of Theorem~\ref{thm:Asymp q t}
but instead of~\eqref{double sum nmid} we write
\begin{equation*}
\begin{split}
\sum_{T \le t \le 2T}\hskip-18 pt{\phantom{\sum}}^\sharp\, \sum_{q\le Q} N_t(q)
& \ge \sum_{T \le t \le 2T}\hskip-18 pt{\phantom{\sum}}^\sharp\, \sum_{Q/2 \le p\le Q} N_t(p) =   \sum_{T \le t \le 2T}\hskip-18 pt{\phantom{\sum}}^\sharp\,
\frac{\varphi(t)}{t}  \sum_{\substack{Q/2 \le p\le Q \\ t\mid(p-1)}}p\\
 &\gg Q \sum_{T \le t \le 2T}\hskip-18 pt{\phantom{\sum}}^\sharp\,\frac{\varphi(t)}{t}
 \(\pi(Q;t,1) - \pi(Q/2;t,1) \).
 \end{split}
\end{equation*}
Using Lemma~\ref{lem:Mikawa} we easily conclude the proof.

\section*{Acknowledgment}

This work was  supported in part by ARC grant~DP140100118.


\begin{thebibliography}{99}


\bibitem{ABK}
M. Ayad,  K., Belghaba and O. Kihel,
`On permutation binomials over finite fields',
{\it Bull. Aust. Math. Soc.\/}, {\bf 89} (2014),  112--124.

\bibitem{BMGVBO}
F. E. Brochero Mart'nez, C. R. Giraldo Vergara and L. Batista de Oliveira,
`Explicit factorization of $x^n-1\in \F_q[x]$',
{\it Preprint\/}, 2014 (see \url{http://arxiv.org/abs/1404.6281}).

\bibitem{Cohen} S. D. Cohen,
`Explicit theorems on generator polynomials',
{\it Finite Fields Appl.\/}, {\bf 11} (2005),   337--357.
%

\bibitem{DIT} F. Dress, H.~Iwaniec, and G. Tenenbaum,
`Sur une somme li{\'e}e {\`a} la fonction de M{\"o}bius',
{\it J. Reine Angew. Math.\/}, {\bf 340} (1983), 53--58.

\bibitem{GrigTen} D.  Grigoriev  and G. Tenenbaum,
`A low complexity probabilistic test for integer multiplication',
{\it J.  Complexity\/}, {\bf 26} (2010), 263--267.

\bibitem{Hucz} S. Huczynska,
`Existence results for finite field polynomials with specified properties',
{\it  Finite fields and their Applications\/}
Radon Ser. Comput. Appl. Math., vol.~11, De Gruyter, Berlin, 2013, 65--87,

\bibitem{IwKow} H. Iwaniec and E. Kowalski,
{\it Analytic number theory\/}, Amer.  Math.  Soc.,
Providence, RI, 2004.
%
\bibitem{LiNi} R.~Lidl and H.~Niederreiter,
{\it Finite Fields\/}, Addison-Wesley 1983.
%



\bibitem{MaZi} A. Masuda and M. E. Zieve,
`Permutation binomials over finite fields',
{\it Trans. Amer. Math. Soc.\/}, {\bf  361} (2009), 4169--4180.

\bibitem{Mik}
H.~Mikawa, `On primes in arithmetic progressions', {\it Tsukuba
J.\ Math.\/} {\bf 25} (2001), 121--153.

\bibitem{Poll} P.~Pollack,  `Irreducible polynomials with several prescribed coefficients',
{\it Finite Fields Appl.\/}, {\bf 22} (2013), 70--78.

\bibitem{Shoup} V. Shoup,
`New algorithms for finding irreducible polynomials over finite fields',
{\it Math. Comp.\/}, {\bf  54} (1990), 435--447.
%
\bibitem{Sur} D.~Suryanarayana,
`Asymptotic formula for $\sum_{n \le x} \frac{\mu^2(n)}{n}$,
{\it Indian J. Math.\/}, {\bf  9} (1967), 543--545.

\bibitem{Vau}
R. C. Vaughan, `An elementary method in prime number theory',
{\it Acta Arith.\/}, {\bf 37} (1980), 111--115.



\end{thebibliography}
\end{document}